\documentclass[11pt,a4paper]{article}

\usepackage[margin=1in]{geometry} 

\usepackage[latin1]{inputenc}
\usepackage[english]{babel}
\usepackage{amsmath}
\usepackage{color}
\usepackage{float} 
\usepackage{amsfonts}
\usepackage{amssymb,amsthm}
\usepackage{hyperref} 
\usepackage{graphicx,abstract}
\usepackage{cleveref}

\newcommand{\RR}{\mathbb{R}}

\newcommand{\NN}{\mathbb{N}}

\newtheorem{theorem}{Theorem}
\newtheorem*{theorem*}{Theorem}
\newtheorem{lemma}{Lemma}
\newtheorem{proposition}{Proposition}

\newtheorem{definition}{Definition}

\newtheorem{remark}{Remark}

\newtheorem{example}{Example}
\renewenvironment{proof}[1][]{\noindent {\bf Proof #1:\;}}{\hfill $\Box$}

\parindent0cm
\parskip2mm

\title{A note on stationarity in constrained optimization}
\author{
	Edouard Pauwels\thanks{Toulouse School of Economics, Toulouse France. Institut Universitaire de France (IUF).}
}
\begin{document}

\maketitle

\begin{abstract}
	Minimizing a smooth function $f$ on a closed subset $C$ leads to different notions of stationarity: Fr\'echet stationarity, which carries a strong variational meaning, and criticality, which is defined through a closure process and involves the notion of limiting, or Mordukovitch, subdifferential. The latter is an optimality condition which may loose the variational meaning of Fr\'echet stationarity in some settings. The purpose of this note is to illustrate that, while criticality is the appropriate notion in full generality, Fr\'echet stationarity is typical in practical scenarios. 
	
	We gather two results to illustrate this phenomenon. These results are essentially known and, our goal is to provide consize self contained arguments in the constrained optimization setting. First we show that if $C$ is semi-algebraic, then for a generic smooth semi-algebraic function $f$, all critical points of $f$ on $C$ are actually Fr\'echet stationary. Second we prove that for small step-sizes, all the accumulation points of the projected gradient algorithm are Fr\'echet stationary, with an explicit global quadratic estimate of the remainder, avoiding potential critical points that are not Fr\'echet stationary, and some bad local minima.
\end{abstract}
{
  \small	
  \textbf{Keywords---} Constrained optimization, non-convex optimization, optimality conditions, stationarity, semi-algebraic optimization, genericity, projected gradient algorithm.
}

\section{Introduction}

We consider the problem
\begin{align}
	\label{eq:mainProblem}
	\min_{x \in C} f(x)
\end{align}
where $f \colon \RR^p \to \RR$ is $C^1$ and $C \subset \RR^p$ is closed. A point $x \in C$ is called Fr\'echet stationary for $f$ on $C$ if $f(y) - f(x) \geq o(\|y-x\|)$ for $y \in C$ (see \Cref{def:tangentCone}), and a vector $v$ is called a regular normal vector to $C$ at $x$ whenever $x$ is Fr\'echet stationary for the linear form $x \mapsto - \left\langle v, x\right\rangle$ on $C$. The notion of regular normal vector lacks basic continuity properties, and in particular limits of regular normals may not correspond to regular normals. The broader notion of criticality aims at recovering a form of continuity, see \Cref{sec:definitions}. But the price is the variational meaning of Fr\'echet stationarity which may be lost due to lack of regularity (see \Cref{ex:sparseVectors}). The purpose of this work is to formaly show that despite the widespread use of the notion of criticality in non-convex optimization, the vast majority of cases encountered relate to the stronger notion of Fr\'echet stationarity, aligning formal guaranties with practical observations.

We first show that if $f$ and $C$ are assumed to be semi-algebraic, then considering the functions $\{f_v \colon x \mapsto f(x) + \left\langle v, x\right\rangle\}$, generically in $v$, all critical points of $f_v$ on $C$ are Fr\'echet stationary. This result illustrates the fact that the existence of critical points that are not Fr\'echet stationary is the consequence of a bad alignment of the objective function $f$ and the constraint set $C$, which is very unlikely under rigidity assumptions, modulo potential small perturbations of the loss function. This result is stated in the semi-algebraic setting, which encompasses many practical scenarios, including sparse vectors, bounded rank matrices. The same result holds for broader classes of functions and constraint sets, definable in o-minimal structures, but we do not expand on this and stick to the semi-algebraic setting for simplicity. This result is known, for example, it is a consequence of \cite[Corollary 4.3]{drusvyatskiy2016generic} and we provide a self contained argument for constrained optimization.

Second, we consider the well known projected gradient algorithm. A typical feature is that the resulting sequences tend to be attracted by critical points and we consider the question of Fr\'echet stationarity for these limit points. It turns out that the answer is positive, the projected gradient algorithm produces sequences that are attracted by Fr\'echet stationary points, with an explicit global quadratic estimate of the negative variation remainder. Although simple, this result provides a much stronger variational guaranty for the resulting limit points compared to mere criticality. This is an intuitive result, which is informaly known and has been investigated in more details recently \cite{olikier2024projected}. 

\subsection{Bibliographic review}

\textbf{Sparsity and rank constrained optimization:} The phenomenon described above does not affect convex constraint sets $C$ or more generally Clarke regular constraint sets $C$ see for example \cite[Definition 6.4]{rockafellar1998variational}. This includes constraint sets defined by smooth inequalities in nonlinear programming, under qualification conditions \cite[Theorem 6.14]{rockafellar1998variational}. The most well known applications where this property fails involve cardinality constraints, such as sparsity and rank. The sparse setting was largely studied in \cite{beck2013sparsity} with a carefull analysis of optimality conditions and algorithms, which were extended in \cite{beck2016minimization}. For low-rank matrices, it was remarked in \cite{luke2013prox} that rank deficiency is the source of an absence of (Clarke) regularity \cite[Definition 6.4]{rockafellar1998variational}, with a potential detrimental effect on the interpretation of optimality conditions \cite{hosseini2019tangent}.

The consequences of lack of regularity on optimality conditions, and optimality measures, was further studied in \cite{levin2023finding} for low-rank matrices under the name ``apocalypses'' with a very similar flavor as \cite{beck2013sparsity} for sparsity constraints. This constitutes further motivations to develop algorithmic schemes for low-rank matrix optimization that avoid this pathology and are attracted by stationary points in \cite{levin2023finding} followed by \cite{hou2021asymptotic,olikier2022apocalypse,olikier2023apocalypse,olikier2023first}. 
As mentioned in \cite{olikier2023gauss}, the absence of regularity only has rare consequences in practice and our main motivation is to provide formal guaranties for this observation in the form of genericity results on problem data and convergence guaranties for the projected gradient algorithm.

\textbf{Genericity in tame optimization:} Our first main result, \Cref{prop:genericFrechet}, relates to semi-algebraicity or tameness of the considered objective function $f$ and the constraint set $C$. Studying non-convex optimization and first order methods under such rigidity assumptions has a long history in optimization. Indeed, semi-algebraicity has numerous structural consequences on the optimization losses \cite{bolte2007loja,bolte2007clarke,bolte2009tame,ioffe2009invitation}. Furthermore, virtually all losses met in an optimization context are covered by tameness assumptions, see the numerous examples in \cite{attouch2010proximal,attouch2013convergence}, and the connection with deep learning in \cite{bolte2020mathematical,bolte2021conservative}. One can take advantage of these properties, for which semi-algebraicity is a mild sufficient condition, to analyse optimization algorithms and optimization landscapes. Examples include sequential convergence of deterministic optimization algorithms \cite{attouch2009convergence,attouch2010proximal,attouch2013convergence,bolte2014proximal,bolte2016majorization,pauwels2016value}, as well as the analysis of stochastic first order methods \cite{davis2020stochastic,bianchi2022convergence,bolte2021conservative,bolte2020mathematical,bolte2022subgradient}.

Genericity is a notion that is used to express the fact that a certain behavior is typical. It is most often expressed in measure theoretic terms (Lebesgue almost everywhere), or topological terms (residual sets are countable intersections of sets with dense interior). In general, the two notions do not coincide (see for example \cite[Theorem 1.6]{oxtoby1971measure}), but in the semi-algebraic setting they coincide and sometimes correspond to a stronger notion: being the complement of the union of finitely many lower dimensional embedded smooth manifolds. This is essentially due to the stratification property \cite[4.8]{van1996geometric}. Genericity results in an optimization context relate to the typical structure of the data of optimization problems \cite{daniilidis2009generic,bolte2011generic,daniilidis2011continuity,pham2016genericity,drusvyatskiy2016generic,bolte2017perturbed,lee2017generic} and generic desirable properties of optimization methods \cite{nie2014optimality,bianchi2023stochastic,davis2022proximal,davis2022nearly}. The first result of this known falls in this category, we show that for a generic semi-algebraic $f$ and a fixed semi-algebraic set $C$, there is no critical point that is not Fr\'echet stationary for the resulting constrained minimization problem. A consequence of this result is that for a generic smooth semi-algebraic function $f$ and a fixed closed set $C$, the ``apocalypses'' described in \cite{levin2023finding} do not exist. As mentioned earlier this result is a consequence of \cite[Corollary 4.3]{drusvyatskiy2016generic}, see also \cite[Theorem 9.60]{ioffe2017variational}. 

\textbf{Projected gradient algorithm:} Our second main result, \Cref{th:projGrad}, concerns the projected gradient algorithm proposed independently by Goldstein \cite{goldstein1964convex} and Levitin Polyak \cite{levitin1966constrained} for convex optimization with subsequent contributions in the convex setting \cite{bertsekas1976goldstein,calamai1987projected,dunn1981global,dunn1987convergence}. Our analysis is a consequence of a detailed analysis of the proximal gradient algorithm, the proximal mapping generalizing the projection. Convergence of the proximal point algorithms in a non-convex setting was considered in \cite{spingarn1982submonotone,kaplan1998proximal,attouch2009convergence}, convergence of the proximal gradient algorithms under semi-algebraic assumptions was given in \cite{attouch2013convergence}. The projected gradient algorithm was explicitely studied in \cite{beck2013sparsity} for sparsity constraints in relation to the notion of $L$-stationarity. Issues regarding convergence guaranties of the projected gradient algorithm is related to the concerns raised in \cite{levin2023finding} and as mentioned above, a similar question was studied in \cite{olikier2024projected}.

\subsection{Notations}
Throughout the paper, the ambient space is $\RR^p$.
We denote by $\left\langle \cdot,\cdot \right\rangle$ and $\|\cdot\|$, the Euclidean scalar product and Euclidean norm. We denote a set-valued map $F$, from $\RR^p$ to subsets of $\RR^p$ with the notation $F \colon \RR^p \rightrightarrows \RR^p$. For a subset $C \subset \RR^p$, we denote by $T_C, \hat{N}_C, N_C$ the tangent, regular normal and normal cones respectively, which are seen as set-valued maps $\RR^p \rightrightarrows \RR^p$ with empty values outside $C$. Relevant definitions are introduced along the paper.

\section{Main results}
We introduce the required elements of variational geometry in \Cref{sec:definitions} and state our two main results in \Cref{sec:generic} and \Cref{sec:projGrad}.

\subsection{Notions of stationarity}
\label{sec:definitions}
We use the notations and denominations of \cite{rockafellar1998variational}. First recall the definitions of the objects of interest.
\begin{definition}[Tangent and Normal Cones] 
	\label{def:tangentCone}
	For $x \in C$, $w \in \RR^p$ is an element of the tangent cone of $C$ at $x$, written $w \in T_C(x)$ if
	\begin{align*}
		\frac{x_k - x}{\tau_k} \to w
	\end{align*}
	for some sequence $(x_k)_{k \in \NN}$, in $C$ and $(\tau_k)_{k \in \NN}$ in $\RR_+$ decreasing to $0$.
	Furthermore, $v \in \RR^p$ is an element of the regular normal cone of $C$ at $x$, written $w \in \hat{N}_C(x)$ if
	\begin{align*}
		\left \langle v, y -x \right\rangle \leq o(\|y-x\|),\qquad y \in C,
	\end{align*}
	where the inequality is understood as $\lim\sup_{y \to x} \frac{\left \langle v, y -x \right\rangle}{\|y-x\|} \leq 0$. 
	Finally, $v \in \RR^p$ is an element of the normal cone of $C$ at $x$, written $w \in N_C(x)$ if 
	\begin{align*}
		\exists (x_k)_{k \in \NN},\, (v_k)_{k \in \NN},\, x_k \in C,\, v_k \in \hat{N}_C(x_k),\, k \in \NN,\, x_k \to x,\, v_k \to w,\, k \to \infty.
	\end{align*}
	$T_C, \hat{N}_C$ and $N_C$ can be seen as set-valued maps $\RR^p \rightrightarrows \RR^p$ by assigning empty values for $x \not \in C$.
\end{definition}

We gather known facts about these cones, the following is taken from Theorem 6.12 and 6.28 \cite{rockafellar1998variational}.
\begin{proposition}
	\label{prop:tangentNormal}
	Let $C \subset \RR^p$ be closed, then for all $x \in C$, $T_C(x)$ is a closed cone and $\hat{N}_C(x)$ is the polar of $T_C(x)$: $\hat{N}_C(x) = \{v \in \RR^p, \, \left\langle v,w \right\rangle \leq 0,\, \forall w \in T_C(x)\}$.

	Let $f \colon \RR^p \to \RR$ be $C^1$. Suppose that $x \in C$ is a local minimum of $f$ restricted to $C$, then the two equivalent conditions hold:
	\begin{align}
		\label{eq:frechetStationary}
		-\nabla f(x) &\in \hat{N}_C(x) \nonumber\\
		\mathrm{proj}_{T_C(x)} (-\nabla f(x)) &= 0.
	\end{align}
	A point $x \in C$ satisfying \eqref{eq:frechetStationary} is called Fr\'echet stationary for $f$ on $C$, which is equivalent to,
	\begin{align}
		\label{eq:illustrFrechet}
		f(y) - f(x) &\geq o(\|y-x\|),& y\in C.
	\end{align}
	This implies the stronger condition $-\nabla f(x) \in N_C(x)$, an $x \in C$ satisfying this condition is called critical for $f$ on $C$.
\end{proposition}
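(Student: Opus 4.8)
The plan is to treat the statement as a bundle of standard facts from variational geometry and to verify each directly from the definitions in \Cref{def:tangentCone}, in the order: the conic structure and closedness of $T_C(x)$, the polarity relation $\hat N_C(x) = T_C(x)^*$, the first order necessary condition, the two equivalent reformulations of Fréchet stationarity, and finally the inclusion into $N_C(x)$. Almost every piece reduces to a short manipulation of the defining sequences, so I would keep the argument self contained rather than invoking the ambient theory of \cite{rockafellar1998variational}.

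First I would check that $T_C(x)$ is a cone: if $(x_k-x)/\tau_k \to w$ and $\lambda>0$, then $(x_k-x)/(\tau_k/\lambda) \to \lambda w$ with $\tau_k/\lambda \downarrow 0$, and $0 \in T_C(x)$ via the constant sequence, so $T_C(x)$ is stable under nonnegative scaling. For closedness I would use the reformulation that $w \in T_C(x)$ iff there exist $\tau_k \downarrow 0$ and $w_k \to w$ with $x + \tau_k w_k \in C$, and run a diagonal extraction on a sequence $w_n \to w$ with $w_n \in T_C(x)$. For the polarity I would prove both inclusions. The easy one, $\hat N_C(x) \subseteq T_C(x)^*$: given $v \in \hat N_C(x)$ and $w \in T_C(x)\setminus\{0\}$ realized by $x_k \to x$, $\tau_k \downarrow 0$, the quotient $\langle v, x_k - x\rangle/\|x_k - x\|$ converges to $\langle v, w\rangle/\|w\|$, whose $\limsup$ is $\le 0$ by definition of $\hat N_C$, forcing $\langle v, w\rangle \le 0$.

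The reverse inclusion $T_C(x)^* \subseteq \hat N_C(x)$ is the step requiring care, and I expect it to be the main obstacle. Assuming $v \in T_C(x)^*$ but $\limsup_{y\to x,\, y \in C}\langle v, y-x\rangle/\|y-x\| = c > 0$, I would pick $y_k \to x$ in $C$ attaining $c$ and pass to a convergent subsequence of the unit vectors $(y_k - x)/\|y_k - x\|$; the limit $w$ lies in $T_C(x)$ (with $\tau_k = \|y_k-x\|$) yet satisfies $\langle v, w\rangle = c > 0$, a contradiction. This compactness and subsequence extraction is the only genuinely nontrivial point; the rest is bookkeeping.

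For the optimality condition I would Taylor expand, $f(y) - f(x) = \langle \nabla f(x), y - x\rangle + o(\|y-x\|)$ by $C^1$ differentiability. Local minimality gives $f(y)-f(x)\ge 0$ for $y \in C$ near $x$, hence $\langle -\nabla f(x), y-x\rangle \le o(\|y-x\|)$, that is $-\nabla f(x) \in \hat N_C(x)$; the same expansion yields at once the equivalence of Fréchet stationarity with $f(y)-f(x) \ge o(\|y-x\|)$ on $C$, since both amount to $\limsup_{y\to x,\, y\in C}\langle -\nabla f(x), y-x\rangle/\|y-x\| \le 0$. The equivalence with $\mathrm{proj}_{T_C(x)}(-\nabla f(x)) = 0$ I would obtain by a direct cone argument rather than a Moreau decomposition, since $T_C(x)$ need not be convex: writing $z = -\nabla f(x)$, the point $0$ is a nearest point of $T_C(x)$ to $z$ iff $2\langle z, w\rangle \le \|w\|^2$ for all $w \in T_C(x)$, and replacing $w$ by $\lambda w$ and letting $\lambda \downarrow 0$ shows this is equivalent to $\langle z, w\rangle \le 0$ for all $w$, i.e. $z \in T_C(x)^* = \hat N_C(x)$ (the same computation forces the nearest point to be unique, so $\mathrm{proj}_{T_C(x)}(z)=\{0\}$). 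Finally $\hat N_C(x) \subseteq N_C(x)$ is immediate from the constant-sequence choice $x_k \equiv x$, $v_k \equiv -\nabla f(x)$ in the definition of $N_C$, which gives the last implication.
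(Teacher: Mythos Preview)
Your argument is correct in every piece; the cone property, closedness, polarity, the Taylor-based optimality condition, the projection characterization via the scaling $w \mapsto \lambda w$, and the trivial inclusion $\hat N_C \subset N_C$ all go through as written. The paper, however, does not prove this proposition at all: it simply records it as ``taken from Theorem 6.12 and 6.28'' of \cite{rockafellar1998variational} and moves on. So your route is genuinely different in that it is self contained from \Cref{def:tangentCone}, whereas the paper treats the result as background and delegates the proof to the reference. What your approach buys is independence from the monograph and transparency about where each hypothesis is used (in particular, that the projection equivalence needs only the cone structure of $T_C(x)$ and not convexity); what the paper's approach buys is brevity and a pointer to the broader framework in which these facts live. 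One small remark: when you argue that any nearest point $w_0\neq 0$ would force $\langle z,w_0\rangle = \tfrac12\|w_0\|^2>0$, you might state this line explicitly rather than leaving it to ``the same computation'', since it is the step that justifies writing $\mathrm{proj}_{T_C(x)}(z)=0$ as an equality rather than a membership.
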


\Cref{prop:tangentNormal} suggests to use \eqref{eq:frechetStationary} as an optimality condition for constrained optimization, however the proposed quantity lacks basic continuity in general, which is troublesome for many applications. This motivates the introduction of the normal cone to $C$, $N_C$, which is the graph closure of the regular normal cone to $C$, recovering some form of continuity and the possibility to pass to limits. 
Typical optimization results fall in this scope and provide guaranties in terms of criticality, $-\nabla f(x) \in N_C(x)$, in the context of \eqref{eq:mainProblem}, which is necessary but not sufficient for \eqref{eq:frechetStationary}. While this constitutes a bona fide optimality condition, in the sense that if it is not satisfied, $x$ is not a local extremum, it may result in meaningless notion of criticality contrary to the interpretation of Fr\'echet stationarity in \eqref{eq:illustrFrechet}.

\begin{example}
	\label{ex:sparseVectors}
	Let $C \subset \RR^2$ be the set of $1$-sparse vectors, then $N_C(0,0) = T_C(0,0) = C$ while $\hat{N}_C(0,0) = \{0\}$. Set $f\colon (x,y) \mapsto (x-1)^2 + y^2$, then $(0,0)$ is critical for $f$ on $C$ but this does not have much variational meaning since $f$ has directional derivative $2$ in the $y$ direction, which is in $T_C(0,0)$ and is actually admissible with respect to the constraint induced by $C$.
\end{example}	
To ellaborate on this remark and illustrate the relevance of the normal cone in comparison to the regular normal cone, we quote Rockafellar and Wets \cite{rockafellar1998variational} regarding the phenomenon presented in \Cref{ex:sparseVectors}:
\begin{center}
	\textit{This possibility causes some linguistic discomfort over `normality', but the cone of such limiting normal vectors comes to dominate technically in formulas and proofs, $[\ldots]$ Many key results would fail if we tried to make do with regular normal vectors alone.}
\end{center} 
This absence of regularity is related to the main motivations in \cite{levin2023finding} to propose algorithms that do not suffer from it.

\subsection{Genericity of Fr\'echet stationarity}
\label{sec:generic}
We start by introducing the necessary tools from semi-algebraic geometry. An introduction to semi-algebraic and tame geometry is found in \cite{coste2000introduction,coste2000introductionSA} and a comprehensive overview is given in \cite{van1996geometric}, see also \cite{van1998tame}. We recall all the required concepts with necessary bibliographic pointers.
\subsubsection{Semi-algebraic geometry}
Let us first introduce the required definitions.
\begin{definition}
	Let $p,q \in \NN$ be arbitrary.

	A basic semi-algebraic set $S \subset \RR^p$ is the solution set to a polynomial system of inequalities.
	\begin{align}
		S = \{x \in \RR^p,\, P(x) = 0, Q_1(x)>0,\ldots Q_m(x) > 0\}
	\end{align}
	where $P,Q_1,\ldots,Q_m$ are polynomials of $p$ variables and $m \in \NN$.

	A semi-algebraic set is the finite union of basic semi-algebraic sets. 

	A function $f \colon \RR^p \to \RR^q$ is semi-algebraic if its graph $\{(x,z) \in \RR^{p + q},\, z = f(x)\}$ is semi-algebraic.

	A set-valued map $F \colon \RR^p \rightrightarrows \RR^q$ is semi-algebraic if its  graph $\{(x,z) \in \RR^{p + q},\, z \in F(x)\}$ is semi-algebraic.
	\label{def:semiAlgebraicSet}
\end{definition}

\begin{example}[semi-algebraic functions]
	Euclidean norm, square root, quotients, rational powers, matrix rank are semi-algebraic functions. Semi-algebraic functions are  closed under composition.
\end{example}

Semi-algebraic objects are closed under many relevant operations: intersection, unions, complementation and Cartesian product \cite{coste2000introduction}. The Tarski-Seidenberg principle \cite[Theorem 2.3]{coste2000introductionSA} allows to characterize semi-algebraic sets as the smallest o-minimal structure \cite[Exercise 1.17]{coste2000introduction}. Therefore the general tools of o-minimal geometry apply to the semi-algebraic setting. Furthermore, many results for semi-algebraic sets naturaly extend to the o-minimal setting. In this spirit, we gather below properties that will be useful in order to prove our genericity result.
\begin{proposition} Let $p,m \in \NN$ be arbitrary and $C \subset \RR^p$ and $F \colon \RR^p \rightrightarrows \RR^m$ be semi-algebraic.
	\begin{enumerate}
		\item The projection of $C$ onto a subspace is semi-algebraic. \label{prop:it1:TarskiSeidenberg}
		\item $T_C$, $\hat{N}_C$ and $N_C$ are semi-algebraic set-valued maps. \label{prop:it2:coneSA}
		\item The interior and closure of $C$ are semi-algebraic. \label{prop:it3:closure}
		\item If $F$ is a differentiable function (single valued), then its Jacobian is a semi-algebraic function. \label{prop:it4:derivative}
		\item The image of $C$ by $F$, $F(C) \subset \RR^m$ is semi-algebraic. \label{prop:it5:image}
		\item $C$ can be partitioned into a finite union of disjoint semi-algebraic smooth embedded submanifolds. \label{prop:it6:stratification}
	\end{enumerate}
	\label{prop:SA}
\end{proposition}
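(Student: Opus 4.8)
The unifying tool is the Tarski--Seidenberg principle, which I would invoke in its quantifier-elimination form: any subset of $\RR^n$ that is the solution set of a first-order formula whose atomic predicates are polynomial (in)equalities and membership conditions ``$y \in S$'' in fixed semi-algebraic sets $S$ is itself semi-algebraic, because the semi-algebraic sets are closed under finite Boolean combinations and projections (and $\forall$ rewrites as $\neg\exists\neg$). Granting this, most items reduce to writing down an appropriate formula. Items~\ref{prop:it1:TarskiSeidenberg} and~\ref{prop:it5:image} are the most direct: the projection of $C$ onto a coordinate subspace is exactly what is obtained by existentially quantifying the eliminated coordinates, which is the textbook statement of Tarski--Seidenberg, and a general linear projection reduces to this after a linear change of variables; for the image I would write $y \in F(C)$ iff $\exists x\,(x \in C \wedge (x,y) \in \mathrm{graph}\,F)$, so that $F(C)$ is the projection onto the $y$-coordinates of the semi-algebraic set $(C \times \RR^m) \cap \mathrm{graph}\,F$, hence semi-algebraic by item~\ref{prop:it1:TarskiSeidenberg}.

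Items~\ref{prop:it3:closure}, \ref{prop:it4:derivative}, and~\ref{prop:it2:coneSA} then follow by exhibiting first-order definitions. For the closure, $x \in \overline{C}$ iff $\forall \epsilon>0\,\exists y\,(y \in C \wedge \|y-x\|^2 < \epsilon)$, and for the interior, $x \in \inte C$ iff $\exists \epsilon>0\,\forall y\,(\|y-x\|^2 < \epsilon \Rightarrow y \in C)$. For the Jacobian of a differentiable single-valued $F$, the matrix $J$ equals $\nabla F(x)$ iff the usual $\epsilon$--$\delta$ differentiability estimate holds, a formula that is first-order in $(x,J)$ with parameter set $\mathrm{graph}\,F$; thus $\{(x,J) : J = \nabla F(x)\}$ is semi-algebraic.

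For the cones in item~\ref{prop:it2:coneSA} I would proceed in three steps. First, $\dist(\cdot,C)$ is semi-algebraic, since (for $C$ closed and nonempty) its graph is $\{(z,t): t\ge 0 \wedge \forall y(y\in C\Rightarrow \|z-y\|^2\ge t^2)\wedge \exists y(y\in C \wedge \|z-y\|^2 = t^2)\}$. Second, using the contingent-cone characterization $T_C(x)=\{w:\liminf_{\tau\downarrow 0}\dist(x+\tau w,C)/\tau = 0\}$, which matches \Cref{def:tangentCone}, I get $w \in T_C(x)$ iff $x \in C \wedge \forall \epsilon>0\,\forall\delta>0\,\exists\tau\,(0<\tau<\delta \wedge \dist(x+\tau w,C) < \epsilon\tau)$, so $\mathrm{graph}\,T_C$ is semi-algebraic. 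Third, by \Cref{prop:tangentNormal} the regular normal cone is the polar of the tangent cone, whence $v \in \hat{N}_C(x)$ iff $x\in C \wedge \forall w\,(w \in T_C(x) \Rightarrow \langle v,w\rangle \le 0)$ gives a semi-algebraic graph, and finally $N_C$ is by definition the graph closure of $\hat{N}_C$ in $\RR^{2p}$, hence semi-algebraic by the closure part of item~\ref{prop:it3:closure} applied to $\mathrm{graph}\,\hat{N}_C$.

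The genuine obstacle is item~\ref{prop:it6:stratification}, which is not a consequence of Tarski--Seidenberg alone but of the stratification (cell-decomposition) theorem for semi-algebraic sets; here I would simply invoke the existence of a finite $C^\infty$ stratification compatible with $C$, as in \cite[4.8]{van1996geometric} (see also \cite{coste2000introductionSA}), whose strata are disjoint semi-algebraic smooth embedded submanifolds partitioning $C$. In a fully self-contained treatment essentially all the work would sit in this cell-decomposition step; for the present note it is legitimate to cite it, so that the proof as a whole is a dictionary translation of each statement into a first-order formula together with one appeal to stratification.
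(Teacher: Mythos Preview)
Your proposal is correct and takes essentially the same approach as the paper: both reduce items~\ref{prop:it1:TarskiSeidenberg}--\ref{prop:it5:image} to writing first-order formulas and invoking Tarski--Seidenberg, and both cite the stratification theorem \cite[4.8]{van1996geometric} for item~\ref{prop:it6:stratification}. The only cosmetic difference is in item~\ref{prop:it2:coneSA}: you pass through the semi-algebraicity of $\dist(\cdot,C)$ and the contingent-cone formula $\liminf_{\tau\downarrow 0}\dist(x+\tau w,C)/\tau=0$, whereas the paper writes a direct first-order description of $T_C$ in terms of nearby points of $C$; both are valid and yield the same conclusion.
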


\begin{proof}[of \Cref{prop:SA}]
	These are well known and can be found in \cite{van1996geometric,coste2000introductionSA,coste2000introduction}, we provide proof arguments and detailed pointers for completeness.
	\begin{enumerate}
		\item This is Tarski-Seidenberg Theorem, up to a rotation, see \cite[Theorem 2.3]{coste2000introductionSA}. An equivalent formulation of this result is \cite[Theorem 2.6]{coste2000introductionSA} states that every first-order formula (quantification on variables), involving semi-algebraic sets or functions, polynomials, inequalities, equalities and the logical negation, conjunction and disjunction, describes a semi-algebraic object. In the following, we describe each set of interest with such a first-order formula, which implies that they are semi-algebraic (see \cite[Section 2.1.2]{coste2000introductionSA} and \cite[Theorem 1.13]{coste2000introduction}). 
		\item 
			\begin{align*}
				z \in T_C(x) \quad &\Leftrightarrow \quad \forall \epsilon > 0,\, \exists y \in C,\, y\neq x,\,  \|y-x\| \leq \epsilon,\, \left\vert \frac{y-x}{\|y-x\|} - \frac{z}{\|z\|} \right\vert \leq \epsilon\\
				z \in \hat{N}_C(x) \quad &\Leftrightarrow \quad \forall w \in T_C(x),\, \left\langle z,w\right\rangle \leq 0 \\
				z \in N_C(x) \quad &\Leftrightarrow \quad \forall \epsilon > 0, \exists y \in C,\, \exists v \in \hat{N}_C(y),\, \|x-y\|\leq \epsilon,\, \|v - z \| \leq \epsilon.
			\end{align*}
		\item 
			\begin{align*}
				x \in \mathrm{int}\ C\quad &\Leftrightarrow \quad \exists \epsilon > 0,\, \forall y \in \RR^p,\, \|x-y\|> \epsilon \text{ or } y \in C\\
				x \in \mathrm{cl}\ C\quad &\Leftrightarrow \quad \forall \epsilon > 0, \exists y \in C, \|y-x\| \leq \epsilon.
			\end{align*}
		\item 
			\begin{align*}
				M = J_F(x) \quad \Leftrightarrow \quad &M \in \RR^{m \times p},\, \forall \epsilon > 0,\, \exists \delta > 0,\, \forall y \neq x,\, \\
				&\|y-x\| > \delta \text{ or } \frac{\|F(y) - F(x) - M(y-x)\|}{\|y-x\|} \leq \epsilon
			\end{align*}
		\item 
			\begin{align*}
				z \in F(C) \quad \Leftrightarrow \quad \exists x \in C,\, z \in F(x).
			\end{align*}
		\item This is the geometric notion of stratification, see for example in  \cite[Claim 4.8]{van1996geometric}.
	\end{enumerate}
\end{proof}

Semi-algebraic sets come with a notion of integral dimension, denoted by $\mathrm{dim}\ C$ for a semi-algebraic set $C$, which agrees with the classical notion of dimension for affine sets or embedded manifolds.
The following facts can be found in \cite[Proposition 3.17, Theorem 3.22]{coste2000introduction} and will be useful to prove our genericity result.
\begin{proposition}
	Let $p,m \in \NN$ be arbitrary. 
	\begin{enumerate}
		\item If $ B\subset A \subset \RR^p$ are semi-algebraic, then $\mathrm{dim} B \leq \mathrm{dim} A$.
		\item If $A,B \subset \RR^p$ are semi-algebraic, then $\mathrm{dim}\ A \cup B = \max\{\mathrm{dim}\ A, \mathrm{dim}\ B\}$.
		\item For any $A \subset \RR^p$ semi-algebraic, $\mathrm{dim}\ \mathrm{cl}\ A = \mathrm{dim}\ A$, $\mathrm{dim}\ \mathrm{cl}\ A \setminus A < \mathrm{dim}\ A$. 
		\item For $f \colon \RR^p \to \RR^m$ and $A \subset \RR^p$, both semi-algebraic, $\mathrm{dim}\ f(A) \leq \mathrm{dim}\ A$. 
	\end{enumerate}
	\label{prop:dimension}
\end{proposition}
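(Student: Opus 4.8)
The plan is to reduce every one of the four statements to the stratification property of \Cref{prop:SA}, taking as foundational input (from \cite{coste2000introduction}) that the dimension $\dim A$ of a semi-algebraic set is well defined and equals the largest manifold-dimension occurring among the strata of any decomposition of $A$ into finitely many smooth semi-algebraic submanifolds. The only enhancements I would use beyond the bare statement of \Cref{prop:SA} are that such a stratification may be chosen (i) to satisfy the \emph{frontier condition} -- if one stratum meets the closure of another stratum, it is contained in it and has strictly smaller dimension -- and (ii) compatibly with any prescribed finite family of semi-algebraic subsets, meaning that each member of the family is a union of strata. Both are standard strengthenings of the stratification theorem \cite[4.8]{van1996geometric}. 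With these in hand, parts (1), (2) and (4) become bookkeeping, while the genuinely geometric content concentrates in part (3).

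For (1), given $B \subset A$ I would stratify $A$ compatibly with $B$; the strata lying in $B$ then constitute a stratification of $B$, so $\dim B \leq \dim A$. For (2), the bound $\dim(A \cup B) \geq \max\{\dim A, \dim B\}$ is immediate from (1) since $A, B \subset A \cup B$; conversely, stratifying $A \cup B$ compatibly with both $A$ and $B$, every stratum lies entirely in $A$ or entirely in $B$, so each has dimension at most $\max\{\dim A, \dim B\}$, which yields the reverse inequality. The same compatible-stratification argument extends (2) to any finite union by induction.

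For (4), I would pass to the graph $\Gamma = \{(x, f(x)) : x \in A\}$, which is semi-algebraic and satisfies the identity $\dim \Gamma = \dim A$: one sees this by partitioning $A$ into finitely many semi-algebraic pieces on which $f$ is continuous (a consequence of cell decomposition), so that the coordinate projection $\pi_1 \colon \Gamma \to A$ restricts to a homeomorphism over each piece, and $\dim \Gamma = \dim A$ follows from (2). Since $f(A) = \pi_2(\Gamma)$, where $\pi_2$ projects onto the last $m$ coordinates, the statement reduces to the single fact that a coordinate projection does not increase dimension, $\dim \pi_2(\Gamma) \leq \dim \Gamma$; granting this, $\dim f(A) = \dim \pi_2(\Gamma) \leq \dim \Gamma = \dim A$.

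For (3), $\dim \mathrm{cl}\, A \geq \dim A$ is immediate from (1), and combined with (2) the equality $\dim \mathrm{cl}\, A = \dim A$ reduces to the frontier inequality $\dim(\mathrm{cl}\, A \setminus A) < \dim A$. I would argue this with a stratification of $\RR^p$ satisfying the frontier condition and compatible with both $A$ and $\mathrm{cl}\, A$ (both semi-algebraic by \Cref{prop:SA}). Each stratum $T \subset \mathrm{cl}\, A \setminus A$ lies in $\mathrm{cl}\, A = \bigcup_{S \subset A} \mathrm{cl}\, S$ (a finite union, so closure commutes with union), hence meets $\mathrm{cl}\, S$ for some stratum $S \subset A$; since $T \cap A = \emptyset$ forces $T \neq S$, the frontier condition gives $\dim T < \dim S \leq \dim A$. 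As this holds for every frontier stratum, $\dim(\mathrm{cl}\, A \setminus A) < \dim A$. The two nonformal ingredients are thus the coordinate-projection bound used in (4) and the frontier condition used in (3); both ultimately rest on the one-variable cell decomposition, where each fiber of a projection is a point (a graph cell, on which the projection is bijective) or an interval (a band cell, on which the dimension drops by one). That cell-decomposition layer is where I expect the real work to lie, the remainder being compatibility bookkeeping.
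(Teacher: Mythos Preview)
The paper does not actually prove \Cref{prop:dimension}: it simply states the four items and refers the reader to \cite[Proposition 3.17, Theorem 3.22]{coste2000introduction}. So there is no ``paper's own proof'' to compare against; your proposal supplies an argument where the paper is content to cite one.

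That said, your sketch is a faithful and correct reconstruction of the standard route taken in the references the paper points to. Items (1) and (2) are indeed pure bookkeeping once one has compatible stratifications, your frontier-condition argument for (3) is exactly how \cite[Theorem 3.22]{coste2000introduction} proceeds, and the graph reduction for (4) matches \cite[Proposition 3.17]{coste2000introduction}. You are right to isolate the two genuinely substantive inputs --- the frontier condition and the fact that coordinate projections do not increase dimension --- and to locate both in the one-variable cell decomposition; that is precisely where Coste and van den Dries do the work. One small point: in (4) your step ``$\pi_1$ restricts to a homeomorphism over each piece, and $\dim \Gamma = \dim A$ follows from (2)'' silently uses that a semi-algebraic homeomorphism preserves dimension, which is itself a nontrivial consequence of cell decomposition; since you already grant the projection bound, you may as well grant this too, but it is worth flagging.
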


\subsubsection{Main result}

The following is our first main result. It is stated in the semi-algebraic setting, but it can be extended to functions and sets that are definable in the same o-minimal structure as the arguments rely on the elements described in \Cref{prop:SA} and \Cref{prop:dimension}, which hold for definable functions and sets \cite{coste2000introduction,van1996geometric}. 

\begin{theorem}
	\label{prop:genericFrechet}
	Let $f \colon \RR^p \to \RR$ be continuously differentiable and $C \subset \RR^p$ be closed, both semi-algebraic. Then there is $V \subset \RR^p$, a finite union of semi-algebraic embedded manifolds of dimension at most $p-1$, such that for all $v \not\in V$, all critical points of $f_v \colon x \mapsto f(x) + \left\langle v, x \right\rangle$ on $C$ are Fr\'echet stationary.
\end{theorem}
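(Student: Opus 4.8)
The plan is to exhibit the bad parameter set $V$ explicitly as the image of a semi-algebraic set under a semi-algebraic map, and then to bound its dimension by $p-1$ using the dimension calculus of \Cref{prop:dimension} together with the stratification in \Cref{prop:SA}.

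First I would reformulate the two stationarity notions in terms of graphs. Writing $A := \mathrm{gph}\,\hat{N}_C = \{(x,n) : x \in C,\ n \in \hat{N}_C(x)\}$, the very definition of $N_C$ as a graph closure gives $\mathrm{gph}\,N_C = \mathrm{cl}(A)$, using that $C$ is closed and that $\hat{N}_C$ has empty values off $C$. Hence the pairs $(x,n)$ with $n$ critical but not Fréchet, i.e. $n \in N_C(x)\setminus\hat{N}_C(x)$, form exactly $G := \mathrm{cl}(A)\setminus A$. Now $x$ is critical but not Fréchet stationary for $f_v$ precisely when $n := -\nabla f(x) - v \in N_C(x)\setminus\hat{N}_C(x)$, that is, when $(x,n)\in G$ and $v = -\nabla f(x)-n$. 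So, setting $\psi(x,n) := -\nabla f(x) - n$, the set of offending parameters is exactly $V := \psi(G)$, and for every $v \notin V$ all critical points of $f_v$ are Fréchet stationary. Every object here is semi-algebraic: $A$ by item \ref{prop:it2:coneSA} of \Cref{prop:SA}, its closure and set difference by item \ref{prop:it3:closure} (semi-algebraic sets being closed under difference), and $\psi$ since $\nabla f$ is semi-algebraic by item \ref{prop:it4:derivative}; thus $V$ is semi-algebraic by item \ref{prop:it5:image}.

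Next I would run the dimension count. By \Cref{prop:dimension}, $\dim V = \dim \psi(G) \le \dim G$, and since $G = \mathrm{cl}(A)\setminus A$ the strict drop under closure gives $\dim G < \dim A$. It therefore suffices to prove the single estimate $\dim A \le p$, which would yield $\dim V \le \dim G < \dim A \le p$, i.e. $\dim V \le p-1$. Once this is in hand, $V$ is a semi-algebraic set of dimension at most $p-1$, and stratifying it via item \ref{prop:it6:stratification} of \Cref{prop:SA} writes it as a finite union of semi-algebraic embedded smooth submanifolds of dimension at most $p-1$, exactly as the statement requires. (Note $A$ is nonempty whenever $C$ is, since $0 \in \hat{N}_C(x)$ for all $x \in C$, so the strict inequality is meaningful.)

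The main obstacle, and the geometric heart of the argument, is the bound $\dim \mathrm{gph}\,\hat{N}_C \le p$. Here I would stratify $C = \bigsqcup_i M_i$ into finitely many semi-algebraic smooth embedded submanifolds (item \ref{prop:it6:stratification}) and control the slice of $A$ over each $M_i$ separately. Fix a stratum $M_i$ of dimension $d_i$ and a point $x \in M_i$. Since $M_i \subseteq C$ and $M_i$ is a smooth manifold, its tangent space satisfies $T_x M_i \subseteq T_C(x)$ by monotonicity of the tangent cone under inclusion. As $\hat{N}_C(x)$ is the polar of $T_C(x)$ (\Cref{prop:tangentNormal}) and polarity reverses inclusions, $\hat{N}_C(x)$ is contained in the polar of the subspace $T_x M_i$, which is its orthogonal complement $(T_x M_i)^{\perp}$; hence $\dim \hat{N}_C(x) \le p - d_i$. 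The slice $\{(x,n) : x \in M_i,\ n \in \hat{N}_C(x)\}$ projects onto $M_i$ with fibres of dimension at most $p-d_i$, so by the semi-algebraic fibre-dimension theorem (a standard consequence of cell decomposition, see \cite{coste2000introduction}) its dimension is at most $d_i + (p-d_i) = p$. Taking the maximum over the finitely many strata gives $\dim A \le p$, completing the argument. The delicate point is precisely this uniform interplay between base dimension $d_i$ and fibre dimension $p-d_i$: the discrepancy $N_C \neq \hat{N}_C$ is confined to the boundary part $\mathrm{cl}(A)\setminus A$, and the strict dimension drop under closure is exactly what supplies the final reduction to $p-1$.
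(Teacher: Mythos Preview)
Your proof is correct and follows essentially the same route as the paper: identify the bad set as the image under a semi-algebraic map of $\mathrm{cl}(\mathrm{gph}\,\hat N_C)\setminus\mathrm{gph}\,\hat N_C$, use the strict dimension drop under closure, and bound $\dim\mathrm{gph}\,\hat N_C\le p$ via a stratification of $C$. The only minor variation is in that last bound: the paper observes that $\mathrm{gph}\,\hat N_C$ restricted to each stratum $M_i$ lies inside the normal bundle of $M_i$, a $p$-dimensional embedded manifold in $\RR^{2p}$, and invokes monotonicity of dimension (\Cref{prop:dimension}, item 1), whereas you use the fibre-dimension inequality; both yield the same estimate and the rest is identical.
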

\begin{proof}[of \Cref{prop:genericFrechet}]
	By \Cref{prop:SA} \cref{prop:it2:coneSA}, $\hat{N}_C$ and $N_C$ are semi-algebraic.

	We work with $\hat{N}_C \colon C \rightrightarrows \RR^p$. It follows from \Cref{def:tangentCone} that for any $S \subset C$, we have for all $x \in S$, $\hat{N}_C(x) \subset \hat{N}_S(x)$. We may consider a partition of $C$ into $M_1,\ldots, M_m$ embedded smooth manifolds by \Cref{prop:SA} \cref{prop:it6:stratification}. For each $i=1,\ldots,m$, we have $M_i \subset C$ and therefore $\hat{N}_C(x) \subset \hat{N}_{M_i}(x)$. But $\hat{N}_{M_i}(x)$ is simply the normal space of $M_i$ at $x$ as described by differential geometry. Therefore the graph of $\hat{N}_C$ restricted to $M_i$ is contained in the normal bundle of $M_i$, which can be seen as an embedded submanifold of $\RR^{2p}$ of dimension $p$, see for example \cite[Theorem 6.23]{lee2012smooth}. Therefore, the graph of the restriction of $\hat{N}_C$ to $M_i$ is of dimension at most $p$ by \Cref{prop:dimension} item 1. The graph of $\hat{N}_C$ is the union of its restriction to $M_i$, $i=1,\ldots,m$ and it is therefore of dimension at most $p$ by \Cref{prop:dimension} item 2.
	
	Set $G$ the closure of $\mathrm{graph}\ \hat{N}_C$ in $\RR^{2p}$, it is semi-algebraic by \Cref{prop:SA} \cref{prop:it3:closure}. We have that $(x,z) \in G$ if and only if there is a sequence $x_k \to x$ and $z_k \to z$ such that $z_k \in \hat{N}_C(x_k)$ for all $k \in \NN$. In other words, we have $G = \mathrm{graph}\ N_C$. By \Cref{prop:dimension} item 3, the semi-algebraic set $H = \mathrm{cl}(\mathrm{graph}\ \hat{N}_C) \setminus \mathrm{graph}\ \hat{N}_C \subset \RR^{2p}$ has dimension at most $p-1$. The set $H$ can be understood as the graph of the possibly empty-valued map $S \colon x \rightrightarrows N_C(x) \setminus \hat{N}_C(x)$.
	
	Now consider the set-valued map $R \colon x \rightrightarrows S(x) + \nabla f(x)$. Using \Cref{prop:dimension} item 4 the dimension of $\mathrm{graph}\ R$ is at most $p-1$ because it is the image of $H$, by the map $(x,z) \mapsto (x,z+\nabla f(x))$, which is semi-algebraic by \Cref{prop:SA} \cref{prop:it4:derivative}. Now we have the following equivalence, for any $v \in \RR^p$
	\begin{align*}
		\exists x \in C,\, - \nabla f(x) - v \in N_C(x) \setminus \hat{N}_C(x) \qquad \Leftrightarrow \qquad \exists x \in C, v \in R(x)
	\end{align*}
	so that
	\begin{align*}
		\left\{ v \in \RR^p, \exists x \in C,\, -\nabla f(x) - v \in N_C(x) \setminus \hat{N}_C(x)\right\} \qquad = \qquad \mathrm{proj}_v \,\, \mathrm{graph}\ R.
	\end{align*}
	where $\mathrm{proj}_v (x,z) = z$ for any $x,z \in \RR^p$.  Setting $V =  \mathrm{proj}_v \,\, \mathrm{graph}\ R$, we have that $\mathrm{dim} V \leq p-1$  by \Cref{prop:dimension} item 4. By \Cref{prop:SA} \cref{prop:it6:stratification}, $V$ is a finite union of semi-algebraic embedded submanifolds of dimension $p-1$ at most by \Cref{prop:dimension} item 2.
\end{proof}

\begin{remark}
	The result of \Cref{prop:genericFrechet} holds generically in $v$, as understood in both measure theoretic terms (almost everywhere), or topological terms (residual). We perturb $f$ using a linear form, but one could consider a peturbation of the form $x \mapsto \epsilon \|x - c\|^2$ for small $\epsilon>0$, and the same result would hold generically in $c \in \RR^p$. It is easy to see that the critical point example in \Cref{ex:sparseVectors} would not persist under generic perturbation and \Cref{prop:genericFrechet} provides a general ground for this observation.
	\label{rem:perturbSquare}
\end{remark}

\subsection{Projected gradient is attracted by Fr\'echet stationary points}
\label{sec:projGrad}
Given a non-empty closed set $C \subset \RR^p$, the projection of $x \in \RR^p$ on $C$, denoted by $\mathrm{proj}_C(x)$ is given by the non-empty set
\begin{align*}
	\mathrm{proj}_C(x) = \arg\min_{z \in C} \|x - z\|.
\end{align*}
Given an initial point $x_0 \in C$ and a step-size parameter, $\gamma > 0$, the projected gradient algorithm iterates
\begin{align}
	\label{eq:projectedGradient}
	x_{k+1} \in \mathrm{proj}_C(x_k - \gamma \nabla f(x_k)).
\end{align}
The following is our second main result. It is a consequence of the analysis of the proximal gradient algorithm proposed in \Cref{sec:proxGrad}
\begin{theorem}
	\label{th:projGrad}
	Let $f \colon \RR^p \to \RR$ be $C^1$ with $L$-Lipschitz gradient and $C\subset \RR^p$ be non-empty and closed. Then for any step size $\gamma< 1/L$, any accumulation point of the projected gradient algorithm, $\bar{x}$, is Fr\'echet stationary such that 
	\begin{align}
		f(y) &\geq f(\bar{x}) - \frac{1}{\gamma}\|y-\bar{x}\|^2, & \forall y \in C, \label{eq:quantitativeFrechetProj}\\
		\mathrm{proj}_C(\bar{x} - s \nabla f(\bar{x})) &= \{\bar{x}\},& \forall 0<s<\gamma.\nonumber 
	\end{align}
	Furthermore, if such an accumulation point exists, $\mathrm{proj}_{T_C(x_k)}(- \nabla f(x_k)) \to 0$ as $k \to \infty$.
\end{theorem}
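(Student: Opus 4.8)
The plan is to analyze one step of the projected gradient iteration using the descent lemma, extract a quantitative decrease in $f$, and then pass to the limit along a convergent subsequence to obtain both the global quadratic estimate and the Fr\'echet stationarity of $\bar{x}$. First I would record the standard descent inequality: since $\nabla f$ is $L$-Lipschitz, for any $x,y$ we have $f(y) \le f(x) + \langle \nabla f(x), y-x\rangle + \frac{L}{2}\|y-x\|^2$. Applied to $x = x_k$ and $y = x_{k+1}$, and using that $x_{k+1}$ is a minimizer of $z \mapsto \|x_k - \gamma\nabla f(x_k) - z\|^2$ over $C$ (equivalently $z \mapsto \langle \nabla f(x_k), z - x_k\rangle + \frac{1}{2\gamma}\|z-x_k\|^2$), comparing the value at $x_{k+1}$ against the value at $x_k \in C$ gives $\langle \nabla f(x_k), x_{k+1}-x_k\rangle + \frac{1}{2\gamma}\|x_{k+1}-x_k\|^2 \le 0$. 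Combining these two inequalities yields
\begin{align*}
	f(x_{k+1}) \le f(x_k) - \left(\frac{1}{2\gamma} - \frac{L}{2}\right)\|x_{k+1}-x_k\|^2,
\end{align*}
and since $\gamma < 1/L$ the coefficient is strictly positive, so $(f(x_k))_k$ is nonincreasing and the successive differences $\|x_{k+1}-x_k\|$ are square-summable whenever $f$ is bounded below along the sequence; in particular $\|x_{k+1}-x_k\| \to 0$ whenever the $f(x_k)$ converge, which they do on any subsequence converging to an accumulation point by continuity of $f$.

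Next I would establish the key variational inequality that holds at \emph{every} iterate, not just in the limit. Because $x_{k+1} \in \mathrm{proj}_C(x_k - \gamma\nabla f(x_k))$ minimizes $\|x_k - \gamma\nabla f(x_k) - z\|^2$ over $z \in C$, comparing against an arbitrary $y \in C$ gives $\|x_k - \gamma\nabla f(x_k) - x_{k+1}\|^2 \le \|x_k - \gamma\nabla f(x_k) - y\|^2$. Expanding both sides and rearranging produces, for all $y \in C$,
\begin{align*}
	\langle \nabla f(x_k), y - x_{k+1}\rangle \ge \frac{1}{\gamma}\langle x_k - x_{k+1}, y - x_{k+1}\rangle - \text{(terms controlled by }\|x_{k+1}-x_k\|\text{)}.
\end{align*}
I would then take a subsequence $x_{k_j} \to \bar{x}$; along it $x_{k_j+1} \to \bar{x}$ as well because $\|x_{k+1}-x_k\| \to 0$, and passing to the limit in the expanded projection inequality (using continuity of $\nabla f$) directly gives $\|\bar{x} - s\nabla f(\bar{x}) - y\|^2 \ge \|\bar{x} - s\nabla f(\bar{x}) - \bar{x}\|^2$ for $y \in C$ at the relevant scale, which is exactly the statement that $\bar{x}$ is the projection; the quadratic estimate \eqref{eq:quantitativeFrechetProj} follows by combining the descent lemma at $\bar{x}$ against an arbitrary $y \in C$ with the limiting variational inequality and optimizing the constant.

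For the Fr\'echet stationarity conclusion, once I have $\mathrm{proj}_C(\bar{x} - s\nabla f(\bar{x})) = \{\bar{x}\}$ for $0<s<\gamma$, I would read off that $\bar{x}$ minimizes $z \mapsto \langle \nabla f(\bar{x}), z-\bar{x}\rangle + \frac{1}{2s}\|z-\bar{x}\|^2$ over $C$, which forces $-\nabla f(\bar{x}) \in \hat{N}_C(\bar{x})$ by letting $s \to 0$ and using the definition of the regular normal cone in \Cref{def:tangentCone}; equivalently the quadratic estimate \eqref{eq:quantitativeFrechetProj} itself immediately gives $f(y)-f(\bar{x}) \ge -\frac{1}{\gamma}\|y-\bar{x}\|^2 = o(\|y-\bar{x}\|)$ only in a weak sense, so I would instead derive the sharp $o(\|y-\bar{x}\|)$ lower bound from the limiting projection identity at each scale $s$ and let $s \to 0$. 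Finally, $\mathrm{proj}_{T_C(x_k)}(-\nabla f(x_k)) \to 0$ should follow from the characterization in \eqref{eq:frechetStationary} together with the fact that $\frac{1}{\gamma}(x_{k+1}-x_k)$ is an approximate projected gradient direction whose norm vanishes. The main obstacle I anticipate is the set-valued and possibly non-Lipschitz nature of $\mathrm{proj}_C$: I must pass to limits in the projection inequality without assuming single-valuedness or continuity of the projection, relying only on $\|x_{k+1}-x_k\| \to 0$, continuity of $f$ and $\nabla f$, and closedness of $C$, and carefully handle the scale parameter $s$ versus the fixed step $\gamma$ so that the limiting inequality holds for all $0<s<\gamma$ rather than only at $s=\gamma$.
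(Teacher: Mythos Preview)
Your direct approach (descent lemma plus passing to the limit in the projection inequality) correctly yields $\bar{x}\in\mathrm{proj}_C(\bar{x}-\gamma\nabla f(\bar{x}))$ and, after expanding, the inequality $\langle -\nabla f(\bar{x}),y-\bar{x}\rangle\le \frac{1}{2\gamma}\|y-\bar{x}\|^2$ for all $y\in C$; combined with the descent lemma this gives \eqref{eq:quantitativeFrechetProj}. This is essentially the paper's argument, though the paper routes it through the proximal gradient result in \Cref{prop:proxGrad} with $g=\delta_C$. One remark: you hesitate over whether the quadratic estimate already gives Fr\'echet stationarity ``only in a weak sense''---it does give it outright, since $\|y-\bar{x}\|^2 = o(\|y-\bar{x}\|)$ as $y\to\bar{x}$, so no $s\to 0$ argument is needed.

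There are, however, two genuine gaps. First, your limit argument only shows $\bar{x}\in\mathrm{proj}_C(\bar{x}-\gamma\nabla f(\bar{x}))$ at the fixed scale $s=\gamma$, with no uniqueness; the theorem asserts $\mathrm{proj}_C(\bar{x}-s\nabla f(\bar{x}))=\{\bar{x}\}$ as an \emph{equality of sets} for every $0<s<\gamma$. You flag this as an obstacle but give no mechanism. The paper handles it by a separate geometric step: the closed ball of radius $s\|\nabla f(\bar{x})\|$ about $\bar{x}-s\nabla f(\bar{x})$ is, except for the single point $\bar{x}$, contained in the \emph{open} ball of radius $\gamma\|\nabla f(\bar{x})\|$ about $\bar{x}-\gamma\nabla f(\bar{x})$, and the latter open ball misses $C$ by the projection identity at scale $\gamma$. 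This strict containment, not any limit, is what forces uniqueness at smaller scales.

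Second, your justification for $\mathrm{proj}_{T_C(x_k)}(-\nabla f(x_k))\to 0$ is not a proof: the step $\frac{1}{\gamma}(x_{k+1}-x_k)$ is \emph{not} the projection of $-\nabla f(x_k)$ onto $T_C(x_k)$, and there is no direct comparison between them. What one does get from Fermat's rule at the projection step is $\frac{1}{\gamma}(x_k-x_{k+1})-\nabla f(x_k)\in\hat N_C(x_{k+1})$, hence $\mathrm{dist}(-\nabla f(x_{k+1}),\hat N_C(x_{k+1}))\to 0$. Converting this into the tangent-cone statement requires the nontrivial inequality $\|\mathrm{proj}_{T}(v)\|\le \mathrm{dist}(v,N)$ for a closed (possibly nonconvex) cone $T$ with polar $N$, which the paper isolates as \Cref{lem:polarCones} and proves via Moreau's decomposition on the convex subcone generated by a single projection point.
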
	
\begin{proof}
	From \cite[Exercise 8.14]{rockafellar1998variational}: $\hat{\partial} \delta_C(\bar{x}) = \hat{N}_C(x)$, where $\delta_C$ is the indicator function of $C$ with value $0$ on $C$ an $+ \infty$ outside. Note that $\delta_C$ satisfies the hypotheses of \Cref{lem:proxUSC} for any $\delta > 0$.

	We have for any $\gamma < 1/L$ that $\tilde{f} = \gamma  f$ has $L\gamma < 1$ Lipschitz gradient and the proximal gradient algorithm with unit step on $\tilde{f}$ and $g = \delta_C$ in \eqref{eq:proxGrad} is equivalent to the projected gradient algorithm on $f$ with step size $\gamma$ so that \Cref{prop:proxGrad} applies. We obtain that all accumulation points $\bar{x}$ are Fr\'echet stationary such that $\bar{x} \in \mathrm{proj}_C( \bar{x} - \gamma \nabla f(\bar{x}))$ from \Cref{prop:proxGrad}, which means that $\mathrm{dist}(\bar{x} - \gamma \nabla f(\bar{x}),C) =  \gamma \| \nabla f(\bar{x})\|$. The quantitative statement on Fr\'echet stationarity follows from \Cref{prop:proxGrad} applied to $\tilde{f} = \gamma f$. 

	Let us prove unicity of the projection \cite[Example 6.16]{rockafellar1998variational}, fix $0<s<\gamma$. The case $\nabla f(\bar{x}) = 0$ is obvious so let us eliminate it. Denote by $B_1$ the ball of center $\bar{x} - \gamma\nabla f(\bar{x})$ and radius $\gamma\|\nabla f(\bar{x})\|$ and $B_2$ the ball of center $\bar{x} - s\nabla f(\bar{x})$ and radius $s\|\nabla f(\bar{x})\|$. Let us show that for any $x \in B_2$, $x \neq \bar{x}$, we have 
	\begin{align*}
		\|x - \bar{x} + \gamma\nabla f(\bar{x})\| &= \|x - \bar{x} + s \nabla f(\bar{x}) + (\gamma - s) \nabla f(\bar{x})\| \\
		&< s\|\nabla f(\bar{x})\| + (\gamma - s) \|\nabla f(\bar{x})\| = \gamma \|\nabla f(\bar{x})\| 
	\end{align*}
	where the strict inequality is from the triangle inequality. Indeed, either the triangle inequality is strict, or $x - \bar{x} + s \nabla f(\bar{x}) = \alpha (\gamma - s) \nabla f(\bar{x})$ for some $\alpha \geq 0$. In this second case, since $x \in B_2$, by taking the norm, we obtain $s \geq \alpha(\gamma - s)$, so that $x = \bar{x} - t \nabla f(\bar{x})$ where $t = s - \alpha (\gamma - s)\geq 0$ and $t \leq s$. The case $t = 0$ is excluded because we assumed that $x \neq \bar{x}$ and we have $0<t\leq s < \gamma$, so that
	\begin{align*}
		\|x - \bar{x} + \gamma\nabla f(\bar{x})\| = \|(\gamma - t) \nabla f(\bar{x})\| = |\gamma - t|\ \|\nabla f(\bar{x})\| < \gamma \|\nabla f(\bar{x})\| 
	\end{align*}
	We have shown that any $x \in B_2$ different from $\bar{x}$ is actually in $\mathrm{int} B_1$ and therefore at positive distance from $C$, otherwise this would contradict $\mathrm{dist}(\bar{x} - \gamma \nabla f(\bar{x}), C) =  \| \nabla f(\bar{x})\|$. Since $\bar{x} \in C$, it is the unique element in $B_2 \cap C$, which proves unicity of the projection.
	
	We conclude regarding the last statement $$\mathrm{proj}_{T_C(x_k)}(- \nabla f(x_k)) \to 0$$ by combining the fact that 
	$\mathrm{dist}(- \nabla f(x_k), \hat{N}_C(x_k)) \to 0$ from \Cref{prop:proxGrad} and \Cref{lem:polarCones}. 
\end{proof}

\begin{lemma}
	\label{lem:polarCones}
	Let $T\subset \RR^p$ be a closed cone, not necessarily convex and $N$ be its polar, $N = \{v \in \RR^p,\, \left\langle w,v\right\rangle \leq 0,\, \forall w \in T\}$, then for any $x \in \RR^p$, $\|\mathrm{proj}_T(x)\| \leq \mathrm{dist}(x,N)$.
	\label{lem:projectionCone}
\end{lemma}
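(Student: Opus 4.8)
The plan is to reduce the whole statement to a single first-order optimality condition along the ray generated by a projection point. This is the only consequence of the cone structure that is needed, and it lets us avoid the fact that $T$ is \emph{not} convex, so that the Moreau decomposition $x = \mathrm{proj}_T(x) + \mathrm{proj}_N(x)$ (with orthogonal, complementary components) is not available here.

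First I would dispose of the trivial case. Since $T$ is a closed cone it contains $0$, so $\mathrm{proj}_T(x)$ is nonempty; if it reduces to $\{0\}$ the inequality is immediate. Otherwise I fix any $w \in \mathrm{proj}_T(x)$ with $w \neq 0$ and aim to show $\|w\| \le \|x-v\|$ for every $v \in N$. The key step is to extract the ray-optimality condition: because $T$ is a cone, $tw \in T$ for all $t \ge 0$, so the scalar quadratic $\phi(t) = \|x - tw\|^2$ is minimized over $[0,\infty)$ at $t=1$. As $\phi$ has positive leading coefficient $\|w\|^2$ and its constrained minimizer $t=1$ lies in the open half-line $(0,\infty)$, we must have $\phi'(1)=0$, that is
\[
\langle x, w\rangle = \|w\|^2 .
\]

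I would then conclude by comparing with an arbitrary $v \in N$. Since $w \in T$ and $v$ is polar to $T$, we have $\langle w, v\rangle \le 0$, hence
\[
\langle w,\, x-v\rangle = \langle w,x\rangle - \langle w,v\rangle \ge \|w\|^2 .
\]
Cauchy--Schwarz gives $\langle w, x-v\rangle \le \|w\|\,\|x-v\|$, and dividing by $\|w\|>0$ yields $\|w\| \le \|x-v\|$. Taking the infimum over $v \in N$ gives $\|w\| \le \mathrm{dist}(x,N)$. The relation $\langle x,w\rangle = \|w\|^2$ forces $\|x-w\|^2 = \|x\|^2 - \|w\|^2$, so all elements of $\mathrm{proj}_T(x)$ share the same norm and the bound is unambiguous for $\|\mathrm{proj}_T(x)\|$.

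The main obstacle is conceptual rather than computational: the absence of convexity of $T$ rules out the usual orthogonal-decomposition argument, so one must take care to use only the one-dimensional optimality along the ray $\RR_+ w$—which survives for arbitrary cones—instead of any global variational inequality. Once that condition is isolated, the remainder is a one-line Cauchy--Schwarz estimate.
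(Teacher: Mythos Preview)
Your proof is correct. Both you and the paper reduce to the ray $\RR_+ w$ generated by a nonzero projection point $w$; the difference is in how that reduction is exploited. The paper explicitly introduces the convex cone $\tilde{T}=\{\lambda w:\lambda\ge 0\}$ with polar $\tilde{N}\supset N$, observes that $\mathrm{proj}_{\tilde{T}}(x)=w$, and then invokes Moreau's decomposition $x=\mathrm{proj}_{\tilde{T}}(x)+\mathrm{proj}_{\tilde{N}}(x)$ to conclude $\|w\|=\mathrm{dist}(x,\tilde{N})\le \mathrm{dist}(x,N)$. You instead extract the scalar first-order condition $\langle x,w\rangle=\|w\|^2$ directly from the one-variable minimization and finish with Cauchy--Schwarz against an arbitrary $v\in N$. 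Your route is self-contained and avoids the external citation to Moreau's identity; the paper's route is slightly more conceptual, packaging the same computation as ``reduce to a convex cone, then apply the known orthogonal decomposition.'' Your closing remark that all projection points share the norm $\sqrt{\|x\|^2-\mathrm{dist}(x,T)^2}$ is a nice clarification that the paper leaves implicit.
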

\begin{proof}
	Set $z = \mathrm{proj}_T(x)$, if $\|z\| = 0$, then there is nothing to prove. Assume that $\|z\| > 0$. Set $\tilde{T} = \{\lambda z,\, \lambda \geq 0\}$ and $\tilde{N}$ its polar, we have
	\begin{align*}
		\tilde{T} &\subset T,\qquad\qquad
		\tilde{N} \supset N ,\qquad\qquad
		\mathrm{proj}_{\tilde{T}}(x) = z.
	\end{align*}
	Both $\tilde{N}$ and $\tilde{T}$ are convex cones and by Moreau's identity \cite[Section 4.b]{moreau1965proximite}, we have $x = \mathrm{proj}_{\tilde{T}}(x) + \mathrm{proj}_{\tilde{N}}(x)$ so that
	\begin{align*}
		\|\mathrm{proj}_{\tilde{T}}(x)\| = \| x - \mathrm{proj}_{\tilde{N}}(x)\| = \mathrm{dist}(x, \tilde{N}) \leq \mathrm{dist}(x, N).
	\end{align*}
\end{proof}

\begin{remark}[Comments on \Cref{th:projGrad}]
	It was identified in \cite{beck2013sparsity} that for sparsity constraints, local minimizers need to be fixed point of the projected gradient algorithm (a condition termed $L$ stationarity) and the projection has to be univalued. \Cref{th:projGrad} shows that for general sets, the projected gradient algorithm will be attracted by such points, generalizing the result of \cite{beck2013sparsity} for the Iterative Hard Thresholding algorithm, as illustrated in \Cref{fig:numericalIllustr}. This result is related to the notion of proximal normals \cite[Example 6.16]{rockafellar1998variational}, the sequences are actually attracted by the set of points $\bar{x} \in C$ such that $-\nabla f(\bar{x})$ is a proximal normal of $C$ at $\bar{x}$. The last assertion in \Cref{th:projGrad} ensures that the so called ``serendipity'' phenomenon described in \cite[Definition 2.8]{levin2023finding} does not affect the projected gradient algorithm. If we assume in addition that $f$ and $C$ are semi-algebraic, then the sequence actually converges, as shown in \cite{attouch2013convergence}. Finally if $f$ is convex one can add a factor $\frac{1}{2}$ in front of the quadratic term in \eqref{eq:quantitativeFrechetProj}.
	\label{rem:PGD}
\end{remark}

\subsection{Numerical illustration}
We illustrate the relevance of the result of \Cref{th:projGrad}, first with the avoidance of a critical point that is not Fr\'echet stationary as in \Cref{ex:sparseVectors}, and second with the avoidance of bad local minima on a grid. These are illustrative toy examples, and in both cases the observed behavior could be justified with elementary dedicated arguments. Exploring consequences of \Cref{th:projGrad} in practical application will be a matter of future research.

\paragraph{Sparsity constraints}
We consider as in \Cref{ex:sparseVectors} the set $C$ of $1$-sparse vectors in $\RR^2$ and a loss function is $f \colon (x,y) \mapsto (x-1)^2+ y^2$ whose global minimum on the constraint set is $x=1, y=0$. We depict in \Cref{fig:numericalIllustr} the sequence generated by the projected gradient algorithm in \eqref{eq:projectedGradient} for various step sizes and initializations, representing both the gradient and the projection steps explicitly. The point $(0,0)$ is critical but not Fr\'echet stationary, none of the three sequences converges to this point. Instead, they all converge to the global minimum, illustrating the result of \Cref{th:projGrad}.

\paragraph{Nonlinear optimization on a grid}
We consider the problem of minimizing a convex quadratic function, where the constraint set is a regular grid in $\RR^2$. In this setting, all feasible points are local minimizers, hence Fr\'echet critical. Yet \Cref{th:projGrad} predicts that not all of them are attractors of the projected gradient algorithm. We illustrate this with several projected gradient sequences in \Cref{fig:numericalIllustr} displaying explicitely the points which do not satisfy the quantitative estimate \eqref{eq:projectedGradient} (with factor $\frac{1}{2}$ for convex functions, see \Cref{rem:PGD}). The sequences stop when they reach these stationary points as predicted by \Cref{th:projGrad}.

\begin{figure}[ht]
	\centering
	\includegraphics[width=.38\textwidth]{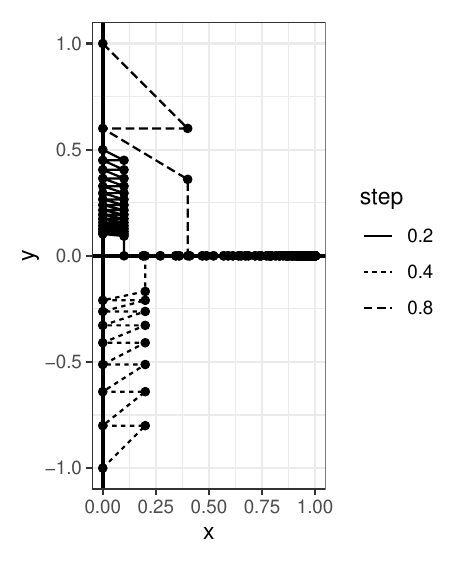}
	\includegraphics[width=.6\textwidth]{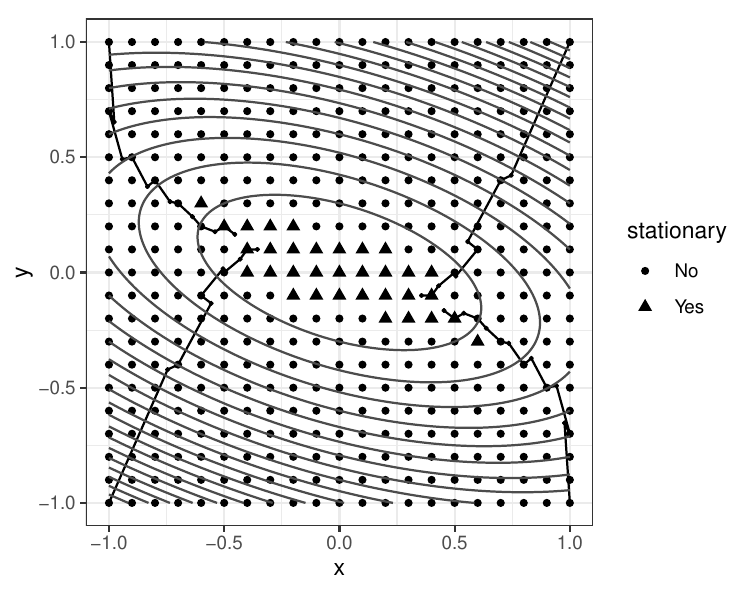}
	\caption{Left: Illustration of the avoidance of the non Clarke regular point of \Cref{ex:sparseVectors}, the constraint set is depicted by the thick black lines and the thiner lines display several projected gradient sequences with different step-sizes. Right: Avoidance of bad local minima. The feasible set is a grid and the contour of the convex quadratic objective is displayed. Every feasible point is a local minimum and the shape of the point indicate those that satisfy the quantitative estimate \eqref{eq:quantitativeFrechetProj} (with an additional factor $\frac{1}{2}$ from \Cref{rem:PGD}). We display several projected gradient sequences which all stop at the points satisfying these estimates. } 
	\label{fig:numericalIllustr}
\end{figure}

\section{The proximal gradient algorithm}
\label{sec:proxGrad}
In this section we provide a general result for the proximal gradient algorithm, from which \Cref{th:projGrad} follows. We first recall the necessary notations and concepts, they can be found in \cite{rockafellar1998variational}. This section can be seen of independent interest.
\subsection{Technical results from non-smooth analysis}
The following extends the notion of gradient in a natural way.
\begin{definition}[Regular subdifferential] 
	\label{def:regularSubdiff}
	Let $f \colon \RR^p \to \RR \cup \{ + \infty\}$ and consider $x \in \RR^p$ such that $f(x) < + \infty$. Then $v \in \hat\partial f(x)$ if 
	\begin{align*}
		f(y) \geq f(x) + \left\langle v, y - x \right\rangle + o(\|y-x\|).
	\end{align*}
	This notation means that $\lim\inf_{y \to x} \frac{f(y) - f(x) - \left\langle v, y - x \right\rangle}{\|y-x\|} \geq 0$.
\end{definition}
We obtain an optimality condition as a consequence of the definition in \cite[Theorem 10.1]{rockafellar1998variational}.
\begin{theorem}[Fermat Rule]
	If $x \in \RR^p$ is a local minimizer of a lower semicontinuous function $f \colon \RR^p \to \RR \cup \{ + \infty\}$, then $0 \in \hat \partial f(x)$.
	\label{th:fermatRule}
\end{theorem}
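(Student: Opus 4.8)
The plan is to unwind \Cref{def:regularSubdiff} with the candidate subgradient $v = 0$. Proving $0 \in \hat\partial f(x)$ amounts, by that definition, to verifying $f(y) \geq f(x) + \langle 0, y - x\rangle + o(\|y-x\|)$, i.e.
\begin{align*}
	\liminf_{y \to x} \frac{f(y) - f(x)}{\|y-x\|} \geq 0.
\end{align*}
So the entire task reduces to checking this single inequality, and the hypothesis to exploit is local minimality.

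First I would record that local minimality furnishes a radius $r > 0$ with $f(y) \geq f(x)$ for every $y$ satisfying $\|y - x\| < r$. (Implicit is $f(x) < +\infty$, which is exactly the standing assumption under which $\hat\partial f(x)$ is defined; note that for a local minimizer of an lower semicontinuous $f$ the only alternative, $f(x) = +\infty$, would force $f \equiv +\infty$ on a neighborhood, a degenerate case outside the scope of the subdifferential.) Then for every such $y \neq x$ the difference quotient $\tfrac{f(y) - f(x)}{\|y-x\|}$ is nonnegative.

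Taking the $\liminf$ as $y \to x$ of a quantity that is nonnegative on a punctured neighborhood of $x$ yields a limit inferior that is itself at least $0$, which is precisely the defining inequality for $0 \in \hat\partial f(x)$. This closes the argument.

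I do not expect a genuine obstacle here: the statement is immediate once the definition is specialized to $v = 0$ and local minimality is substituted. The only points worth flagging are cosmetic rather than mathematical, namely that lower semicontinuity plays no role in this direction of the Fermat rule (it is stated to match the cited framework of \cite{rockafellar1998variational} and the surrounding development), and that one should briefly justify $f(x) < +\infty$ so that $\hat\partial f(x)$ is meaningfully defined before writing down the difference quotient.
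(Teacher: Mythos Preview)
Your argument is correct and is exactly the ``consequence of the definition'' the paper alludes to: the paper does not spell out a proof but simply cites \cite[Theorem~10.1]{rockafellar1998variational}, and your verification that $v=0$ satisfies \Cref{def:regularSubdiff} via local minimality is the intended one-line justification. Your side remarks are also accurate: lower semicontinuity is not used in this direction, and $f(x)<+\infty$ is needed only so that $\hat\partial f(x)$ is defined.
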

Conversely, a point $x \in \RR^p$ with $f(x)$ finite satisfying $0 \in \hat \partial f(x)$ has non-negative first-order variations around $x$, in the sense that $f(y) - f(x) \geq o(\|y-x\|)$. Such a point is called Fr\'echet critical. While calculus is in general out of scope for this type of object, it is possible to obtain sum rules when combined with a $C^1$ function \cite[Exercise 8.8 (c)]{rockafellar1998variational}.

\begin{lemma}[Smooth sum rule] 
	\label{lem:sumRule}
	Let $g \colon \RR^p \to \RR \cup \{ + \infty\}$ be lower semicontinuous and consider $x \in \RR^p$ such that $g(x) < + \infty$. Let $f \colon \RR^p \to \RR$ be $C^1$, then $\hat{\partial} (f + g) (x) = \hat{\partial} g(x) + \nabla f(x)$.
\end{lemma}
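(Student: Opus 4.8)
The plan is to establish the set equality by proving, for an arbitrary $v \in \RR^p$, the single equivalence
\[
v \in \hat{\partial}(f+g)(x) \quad \Longleftrightarrow \quad v - \nabla f(x) \in \hat{\partial} g(x),
\]
which is precisely the claimed identity $\hat{\partial}(f+g)(x) = \hat{\partial} g(x) + \nabla f(x)$. Working with an equivalence rather than two separate inclusions is the natural route here, since the only tool available is \Cref{def:regularSubdiff} itself, and the argument turns out to be symmetric in $g$ and $f+g$ (replacing $f$ by the equally $C^1$ function $-f$ swaps the two roles).

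Concretely, I would fix $v$, abbreviate $h = f+g$, and for $y \neq x$ split the difference quotient defining $\hat{\partial} h(x)$ as
\begin{align*}
	\frac{h(y) - h(x) - \left\langle v, y - x\right\rangle}{\|y-x\|}
	= \frac{f(y) - f(x) - \left\langle \nabla f(x), y - x\right\rangle}{\|y-x\|}
	+ \frac{g(y) - g(x) - \left\langle v - \nabla f(x), y - x\right\rangle}{\|y-x\|}.
\end{align*}
Because $f$ is $C^1$, its first-order Taylor expansion at $x$ gives $f(y) - f(x) - \left\langle \nabla f(x), y - x\right\rangle = o(\|y-x\|)$, so the first quotient on the right-hand side tends to $0$ as $y \to x$.

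The key step, and the only point requiring any care, is passing to the $\liminf$ as $y \to x$. Since the first quotient genuinely converges (to $0$) rather than merely possessing a $\liminf$, I can invoke the elementary fact that $\liminf_{y \to x}(a_y + b_y) = \liminf_{y \to x} a_y$ whenever $b_y \to 0$, which yields
\begin{align*}
	\liminf_{y \to x} \frac{h(y) - h(x) - \left\langle v, y - x\right\rangle}{\|y-x\|}
	= \liminf_{y \to x} \frac{g(y) - g(x) - \left\langle v - \nabla f(x), y - x\right\rangle}{\|y-x\|}.
\end{align*}
By \Cref{def:regularSubdiff}, the left-hand $\liminf$ is nonnegative exactly when $v \in \hat{\partial} h(x)$, and the right-hand one is nonnegative exactly when $v - \nabla f(x) \in \hat{\partial} g(x)$; hence the two membership conditions are equivalent, proving the lemma. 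Lower semicontinuity of $g$ together with $g(x) < +\infty$ only serves to guarantee that $h(x)$ is finite and all quantities are well defined, so no further regularity is needed. I do not expect a serious obstacle: the whole content is the additivity of $\liminf$ under a convergent perturbation, and the $C^1$ hypothesis is exactly what supplies that convergent ($o(\|y-x\|)$) perturbation.
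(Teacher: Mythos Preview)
Your proof is correct and follows essentially the same approach as the paper: split the difference quotient into the $f$-part, which has a genuine limit $0$ by $C^1$-differentiability, and the $g$-part, then pass to the $\liminf$. The only cosmetic difference is that the paper invokes \cite[Corollary 10.9]{rockafellar1998variational} for the inclusion $\hat{\partial}(f+g)(x) \supset \hat{\partial} g(x) + \nabla f(x)$ and proves the reverse by the subtraction argument, whereas you obtain both inclusions at once from the equality of $\liminf$'s; your version is therefore slightly more self-contained.
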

\begin{proof}
	From \cite[Corollary 10.9]{rockafellar1998variational} we have $\hat{\partial} (f + g) (x) \supset \hat{\partial} g(x) + \nabla f(x)$. Let us prove the reverse inclusion. Choose $v \in \hat{\partial} (f+g)(x)$, we have  by \Cref{def:regularSubdiff} and continuous differentiability.
	\begin{align*}
		\underset{y \to x}{\mathrm{liminf}} \quad\frac{f(y) + g(y) - f(x) - g(x) - \left\langle v , y - x \right\rangle}{\|y-x\|} &\geq 0 \\
		\lim_{y \to x} \quad \frac{f(y) - f(x) - \left\langle \nabla f(x), y - x \right\rangle}{\|y-x\|} &= 0.
	\end{align*}
	We deduce by a substraction that
	\begin{align*}
		\underset{y \to x}{\mathrm{liminf}}\quad \frac{g(y) - g(x) - \left\langle v - \nabla f(x), y - x \right\rangle}{\|y-x\|} &\geq 0,
	\end{align*}
	which shows that $v - \nabla f(x)\in \hat{\partial} g(x)$, which is the desired result.
\end{proof}

\subsection{The proximal gradient algorithm and Fr\'echet stationarity}
\label{sec:proxGrad}
Given a lower semicontinuous function $g \colon \RR^p \to \RR \cup \{+\infty\}$, the proximity operator of $g$ is defined as the possibly empty valued mapping
\begin{align*}
	\mathrm{prox}_g(x) = \arg\min_{y \in \RR^p} g(y) + \frac{1}{2} \|y-x\|^2.
\end{align*}
The following Lemma provides a sufficient condition for $\mathrm{prox}_g$ to be well behaved.
This is \cite[Theorem 1.25]{rockafellar1998variational}, we provide a short proof for completeness.
\begin{lemma}
	\label{lem:proxUSC}
	Let $g \colon \RR^p \to \RR \cup \{+\infty\}$ be lower semicontinuous, finite at least at one point, such that $g + \frac{1-\delta}{2} \|\cdot\|^2$ is bounded below for some $\delta > 0$. Then $\mathrm{prox}_g \colon \RR^p \rightrightarrows \RR^p$ has non-empty values, is locally bounded and upper semicontinuous, in the sense that for any converging sequences $y_k \in \mathrm{prox}_g(x_k)$, $k \in \NN$, $x_k \to x$, $y_k \to y$, we have $y \in \mathrm{prox}_g(x)$.
\end{lemma}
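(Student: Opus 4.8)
The plan is to treat the three conclusions --- non-emptiness, local boundedness, and outer semicontinuity --- in turn, with the coercivity of the proximal objective doing most of the work. Throughout I write $\varphi_x(y) = g(y) + \frac{1}{2}\|y-x\|^2$, so that $\mathrm{prox}_g(x) = \argmin_{y} \varphi_x(y)$. The whole argument hinges on a single quantitative coercivity estimate, which I would establish first.

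First I would extract that estimate from the hypothesis. Let $M$ be such that $g(y) \geq -M - \frac{1-\delta}{2}\|y\|^2$ for all $y$. Expanding $\frac{1}{2}\|y-x\|^2 = \frac{1}{2}\|y\|^2 - \langle y,x\rangle + \frac{1}{2}\|x\|^2$ and adding yields
\[
	\varphi_x(y) \;\geq\; -M + \frac{\delta}{2}\|y\|^2 - \langle y,x\rangle + \tfrac{1}{2}\|x\|^2 \;\geq\; -M + \frac{\delta}{2}\|y\|^2 - \|x\|\,\|y\|,
\]
using Cauchy--Schwarz and discarding the nonnegative term $\frac{1}{2}\|x\|^2$. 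Since $\delta > 0$, the right-hand side tends to $+\infty$ as $\|y\| \to \infty$, so $\varphi_x$ is coercive; it is also lower semicontinuous, being the sum of the lsc function $g$ and a continuous one. A coercive lsc function on $\RR^p$ attains its infimum, and that infimum is finite because $g$ is finite at some point, so $\mathrm{prox}_g(x) \neq \emptyset$. This settles non-emptiness.

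For local boundedness I would make the estimate uniform over a ball $\|x\| \leq R$. Fixing a point $y_0$ with $g(y_0) < +\infty$, every $y \in \mathrm{prox}_g(x)$ satisfies $\varphi_x(y) \leq \varphi_x(y_0) \leq g(y_0) + \frac{1}{2}(\|y_0\|+R)^2 =: C_R$, a constant depending only on $R$. Combined with the lower bound, this gives $\frac{\delta}{2}\|y\|^2 - R\|y\| \leq C_R + M$, a quadratic inequality bounding $\|y\|$ uniformly for $\|x\| \leq R$; hence $\mathrm{prox}_g$ is locally bounded. The outer semicontinuity is the cleanest step: given $x_k \to x$, $y_k \to y$ with $y_k \in \mathrm{prox}_g(x_k)$, the defining inequality $g(y_k) + \frac{1}{2}\|y_k - x_k\|^2 \leq g(z) + \frac{1}{2}\|z - x_k\|^2$ holds for every $z$. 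Letting $k \to \infty$, the right side converges to $g(z) + \frac{1}{2}\|z-x\|^2$, while taking $\liminf$ on the left and using $\frac{1}{2}\|y_k-x_k\|^2 \to \frac{1}{2}\|y-x\|^2$ together with $\liminf_k g(y_k) \geq g(y)$ gives $g(y) + \frac{1}{2}\|y-x\|^2 \leq g(z) + \frac{1}{2}\|z-x\|^2$ for all $z$ (the choice $z = y_0$ also forces $g(y) < +\infty$), so $y \in \mathrm{prox}_g(x)$. The only delicate point in the whole argument is ensuring that the leftover term $\frac{\delta}{2}\|y\|^2$ genuinely dominates the linear term in the coercivity estimate --- this is exactly where the hypothesis $\delta > 0$ enters and cannot be dropped --- after which every remaining step is routine.
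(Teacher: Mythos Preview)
Your proof is correct and follows essentially the same approach as the paper's: coercivity of $\varphi_x$ gives non-emptiness and local boundedness, and the outer semicontinuity step is identical (pass to the $\liminf$ in the defining inequality using lower semicontinuity of $g$ and continuity of the quadratic term). The only difference is that the paper dispatches the first two points in a single sentence (``by assumption $g + \tfrac{1}{2}\|\cdot\|^2$ is coercive so that the prox operator is compact valued and locally bounded''), whereas you spell out the quantitative estimate and the uniform bound over $\|x\|\leq R$ explicitly.
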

\begin{proof}
	By assumption $g + \frac{1}{2} \|\cdot\|^2$ is coercive so that the prox operator is compact valued and locally bounded. Let $(x_k)_{k \in \NN}$ and $(y_k)_{k \in \NN}$ be sequences such that $y_k \in \mathrm{prox}_g(x_k)$ for all $k \in \NN$, and $x_k \to x$, $y_k \to y$ as $k \to \infty$. For any $z \in \RR^p$, and any $k \in \NN$, we have
	\begin{align*}
		g(z) + \frac{1}{2} \|z- x_k\|^2 \geq g(y_k) + \frac{1}{2} \|y_k- x_k\|^2.
	\end{align*}	
	Hence for any $z \in \RR^p$, we have by lower semicontinuity
	\begin{align*}
		g(z) + \frac{1}{2} \|z- x_k\|^2 \geq {\mathrm{liminf}}_{k \to \infty} \ g(y_k) + \frac{1}{2} \|y_k- x_k\|^2 \geq g(y) + \frac{1}{2} \|y- x\|^2,
	\end{align*}	
	which is what we wanted to prove.
\end{proof}

We now state the main result of this section.
\begin{theorem}
	\label{prop:proxGrad}
	Let $f \colon \RR^p \to \RR$ be $C^1$ with $1-\delta$ Lipschitz gradient for some $\delta \in (0,1)$ and $g$ be as in \Cref{lem:proxUSC}. Fix $x_0 \in \RR^p$, $g(x_0)< +\infty$, and consider the recursion
	\begin{align}
		\label{eq:proxGrad}
		x_{k+1} \in \mathrm{prox}_g(x_k - \nabla f(x_k)).
	\end{align}
	Any accumulation point $\bar{x}$ of $(x_k)_{k \in \NN}$ is Fr\'echet stationary for $f+g$ such that 
	\begin{align}
		\bar{x} &\in \mathrm{prox}_g( \bar{x} - \nabla f(\bar{x})) \label{eq:quantitativeFrechetProx}\\
		f(y)+g(y) &\geq f(\bar{x}) + g(\bar{x}) - \|y - \bar{x}\|^2,& \forall y \in \RR^p. \nonumber 
	\end{align}
	Furthermore, $\mathrm{dist}(-  \nabla f(x_{k+1}),\hat{\partial} g (x_{k+1})) \to 0$ as $k \to \infty$.
\end{theorem}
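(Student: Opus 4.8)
The plan is to rest everything on a single sufficient-decrease inequality and then pass to the limit through the upper semicontinuity of $\mathrm{prox}_g$ supplied by \Cref{lem:proxUSC}.

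\textbf{Step 1 (descent inequality).} Since $x_{k+1}$ minimizes $y \mapsto g(y) + \tfrac12\|y - (x_k - \nabla f(x_k))\|^2$, I would compare its value with the value at $y = x_k$ and expand the square to get
\begin{align*}
	g(x_{k+1}) + \left\langle \nabla f(x_k), x_{k+1}-x_k\right\rangle + \tfrac12\|x_{k+1}-x_k\|^2 \leq g(x_k).
\end{align*}
Adding the descent lemma for $f$ (available because $\nabla f$ is $(1-\delta)$-Lipschitz), namely $f(x_{k+1}) \leq f(x_k) + \left\langle \nabla f(x_k), x_{k+1}-x_k\right\rangle + \tfrac{1-\delta}{2}\|x_{k+1}-x_k\|^2$, the linear terms cancel and I obtain $(f+g)(x_{k+1}) \leq (f+g)(x_k) - \tfrac{\delta}{2}\|x_{k+1}-x_k\|^2$. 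A short induction (each new iterate is a prox value, hence has finite $g$) shows these quantities are well defined, and the inequality makes $(f+g)(x_k)$ non-increasing.

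\textbf{Step 2 (vanishing increments).} The monotone values have a limit $\ell \in [-\infty,+\infty)$. Given an accumulation point $\bar{x}$ with $x_{k_j} \to \bar{x}$, I would use continuity of $f$ and lower semicontinuity of $g$ to write $\ell = \liminf_j (f+g)(x_{k_j}) \geq (f+g)(\bar{x}) > -\infty$, so $\ell$ is finite; telescoping Step 1 then yields $\tfrac{\delta}{2}\sum_k \|x_{k+1}-x_k\|^2 \leq (f+g)(x_0) - \ell < \infty$ and hence $\|x_{k+1}-x_k\| \to 0$. This finiteness argument, which leverages the standing assumption that an accumulation point exists together with lower semicontinuity, is the one genuinely delicate point; everything else is mechanical.

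\textbf{Step 3 (limit point and estimate).} Because the increments vanish, $x_{k_j+1} \to \bar{x}$ as well, while $x_{k_j} - \nabla f(x_{k_j}) \to \bar{x} - \nabla f(\bar{x})$; since $x_{k_j+1} \in \mathrm{prox}_g(x_{k_j} - \nabla f(x_{k_j}))$, the upper semicontinuity in \Cref{lem:proxUSC} gives $\bar{x} \in \mathrm{prox}_g(\bar{x} - \nabla f(\bar{x}))$, which is \eqref{eq:quantitativeFrechetProx}. Expanding the prox-optimality at $\bar{x}$ produces $g(y) - g(\bar{x}) + \left\langle \nabla f(\bar{x}), y-\bar{x}\right\rangle \geq -\tfrac12\|y-\bar{x}\|^2$ for all $y$, and adding the lower descent bound $f(y) - f(\bar{x}) \geq \left\langle \nabla f(\bar{x}), y-\bar{x}\right\rangle - \tfrac{1-\delta}{2}\|y-\bar{x}\|^2$ cancels the linear parts to give $(f+g)(y) - (f+g)(\bar{x}) \geq -\tfrac{2-\delta}{2}\|y-\bar{x}\|^2 \geq -\|y-\bar{x}\|^2$, the claimed estimate. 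Since $\|y-\bar{x}\|^2 = o(\|y-\bar{x}\|)$, this lower bound also certifies $0 \in \hat{\partial}(f+g)(\bar{x})$ via \Cref{def:regularSubdiff}, i.e. Fr\'echet stationarity.

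\textbf{Step 4 (last assertion).} I would apply Fermat's rule (\Cref{th:fermatRule}) to the subproblem defining $x_{k+1}$ and the smooth sum rule (\Cref{lem:sumRule}) to peel off the $C^1$ quadratic, obtaining $-(x_{k+1} - x_k + \nabla f(x_k)) \in \hat{\partial} g(x_{k+1})$. Consequently $\mathrm{dist}(-\nabla f(x_{k+1}), \hat{\partial} g(x_{k+1})) \leq \|\nabla f(x_{k+1}) - \nabla f(x_k)\| + \|x_{k+1}-x_k\| \leq (2-\delta)\|x_{k+1}-x_k\|$, which tends to $0$ by Step 2.
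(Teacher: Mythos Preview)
Your proof is correct and follows essentially the same route as the paper: sufficient decrease via the descent lemma, vanishing increments from finiteness of the limiting value, upper semicontinuity of $\mathrm{prox}_g$ to obtain \eqref{eq:quantitativeFrechetProx}, and Fermat's rule at the subproblem for the final distance estimate. The only cosmetic difference is that you read Fr\'echet stationarity directly off the quadratic lower bound (since $\|y-\bar{x}\|^2 = o(\|y-\bar{x}\|)$), whereas the paper invokes \Cref{th:fermatRule} and \Cref{lem:sumRule} at the fixed-point identity; both are valid and your intermediate constant $\tfrac{2-\delta}{2}$ is in fact slightly sharper than the paper's $1$.
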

\begin{proof}
	One can check that $x_{k+1} \in \arg\min_y f(x_k) + \left\langle \nabla f(x_k), y -x_k \right\rangle + \frac{1}{2} \|y - x_k\|^2 + g(y)$ by completing the square.
	
	Combining with the descent lemma for Lipschitz gradient functions \cite[Lemma 1.2.3]{nesterov2003introductory}, we have 
	\begin{align*}
		f(x_k) + g(x_k) &\geq f(x_k) + \left\langle \nabla f(x_k), x_{k+1} -x_k \right\rangle + \frac{1}{2} \|x_{k+1} - x_k\|^2 + g(x_{k+1}) \\
		& =f(x_k) + \left\langle \nabla f(x_k), x_{k+1} -x_k \right\rangle + \frac{1- \delta}{2} \|x_{k+1} - x_k\|^2 + g(x_{k+1}) + \frac{\delta}{2} \|x_{k+1} - x_k\|^2 \\
		&\geq f(x_{k+1}) + g(x_{k+1}) + \frac{\delta}{2} \|x_{k+1} - x_k\|^2.
	\end{align*}
	Now suppose that the sequence $(x_k)_{k \in \NN}$ has an accumulation point $\bar{x}$. In this case $f(x_k) + g(x_k)$ is decreasing, and it converges to a finite value as $\lim\inf_{k \to \infty} f(x_k) + g(x_k) \geq f(\bar{x}) + g(\bar{x})$ by lower semicontinuity. Therefore the increments $x_{k+1} - x_k$ tend to $0$ and, up to the same subsequence, both $x_k$ and $x_{k+1} \in \mathrm{prox}_g(x_k - \nabla f(x_k))$ tend to $\bar{x}$. Using \Cref{lem:proxUSC}, we have that $\bar{x} \in \mathrm{prox}_g( \bar{x} - \nabla f(\bar{x}))$ so that, using Fermat rule in \Cref{th:fermatRule} and \Cref{lem:sumRule}
	\begin{align}
		\bar{x} &\in \arg\min_y g(y) + \frac{1}{2} \|y - \bar x + \nabla f( \bar x)\|^2 \label{eq:fixedPointProximalGradient}\\
		0 &\in \hat \partial \left( g(y) + \frac{1}{2} \|y - \bar x + \nabla f( \bar x)\|^2 \right)_{y= \bar x} = \hat\partial g(\bar{x}) + \nabla f(\bar{x}),\nonumber 
	\end{align}
	which is Fr\'echet stationarity. This actually ensures that $-\nabla f(\bar{x})$ is a proximal subgradient of $g$, and using \cite[Proposition 8.46]{rockafellar1998variational} and the descent Lemma, for all $y \in \RR^p$,
	\begin{align}
		g(y) &\geq g(\bar{x}) + \left\langle - \nabla f(\bar{x}), y - \bar{x}\right\rangle - \frac{1}{2} \|y - \bar{x}\|^2 \nonumber\\
		f(y) &\geq f(\bar{x}) + \left\langle \nabla f(\bar{x}), y - \bar{x}\right\rangle - \frac{1}{2} \|y - \bar{x}\|^2 \label{eq:convex}\\
		f(y)+g(y) &\geq f(\bar{x}) + g(\bar{x}) - \|y - \bar{x}\|^2.\nonumber
	\end{align}
	For the last point, using Fermat rule in \Cref{th:fermatRule} for the prox operator leads to 
	\begin{align*}
		 x_{k+1} - x_k + \nabla f(x_k) &=    x_{k+1} - x_k + \nabla f(x_k) - \nabla f(x_{k+1}) + \nabla f(x_{k+1})  \\ 
		&\in -\hat{\partial} g (x_{k+1})  
	\end{align*}
	so that 
	\begin{align*}
		\mathrm{dist}(-  \nabla f(x_{k+1}),\hat{\partial} g (x_{k+1})) \leq \|x_{k+1} - x_k\| + \|\nabla f(x_k) - \nabla f(x_{k+1})\| \underset{k \to \infty}{\to} 0,
	\end{align*}
	which is the second result.
\end{proof}

\begin{remark}[Comments on \Cref{prop:proxGrad}]
	If in addition, the function $f$ and the set $C$ are assumed to be semi-algebraic, then the sequence actually converges \cite{attouch2013convergence}. The quadratic lower bound provides a quantitative estimate of Fr\'echet stationarity. Furthermore, if $f$ is convex, then one can add a factor $\frac{1}{2}$ in front of the quadratic term in \eqref{eq:quantitativeFrechetProx}, since the inequality \eqref{eq:convex} can be tightened. 
	\label{rem:proxGrad}
\end{remark}



\bibliographystyle{acm}
\bibliography{refs}

\end{document}